\documentclass{amsart}

\usepackage{macros}

\title[Pinned distances in positive characteristic]{Bisector energy and pinned distances in positive characteristic}

\author{Brendan Murphy}
\address{Department of Mathematics, University of Bristol, Bristol BS8 1UG, UK}
\email{brendan.murphy@bristol.ac.uk}
%\thanks{The first author was partially supported by  the Leverhulme  Trust Grant RPG--2017--371.}

\author{Misha Rudnev}
\address{Department of Mathematics, University of Bristol, Bristol BS8 1UG, UK}
\email{misha.rudnev@bristol.ac.uk}
\thanks{The first and second authors are partially supported by the Leverhulme  Trust Grant RPG--2017--371}

\author{Sophie Stevens}
\address{Department of Mathematics, University of Bristol, Bristol BS8 1UG, UK}
\email{sophie.stevens@bristol.ac.uk}

\date\today

\begin{document}
\begin{abstract}
    We prove a new lower bound for the number of pinned distances over finite fields: if $A$ is a sufficiently small subset of $\F_q^2$, then there is an element in $A$ that determines $\gg |A|^{2/3}$ distinct distances to other elements of $A$.
    
    In fact, we obtain an upper bound for the number of isosceles triangles determined by $A$.
    For that we use the concept of bisector energy.  It turns out that the latter can be expressed as a point-plane incidence bound, so one can use a theorem of the third author.
    
    The conversion to this incidence problem relies on the Blaschke-Gr\"unwald kinematic mapping -- an embedding of the group of rigid motions of $\F_q^2$ into an open subset of the projective three space.
    This has long been known in kinematics and geometric algebra; we provide a proof for arbitrary fields using Clifford algebras.
\end{abstract}

\maketitle
\setcounter{tocdepth}{1}
\tableofcontents

\section{Introduction}
    In 1946, Erd{\H{o}}s \cite{erdos1946distances} conjectured that any set of $N$ points in the real plane determines at least $\gg \frac{N}{\sqrt{\log N}}$ distinct distances, with a square grid showing this bound is optimal.
    Guth and Katz \cite{guth2015erdos} nearly resolved this problem 65 years later, proving that $N$ points determine $\gg \frac{N}{\log{N}}$ distances. 
    
    One can also ask this question for non-Euclidean distances over the reals, where some metrics behave similar to the Euclidean one but others do not. 
    For instance, Roche-Newton \cite{roche-newton2012minkowski} and Selig and the second author \cite{rudnev2016klein} prove positive results, while Matou\v{s}ek \cite{matouvsek2011number} and Valtr \cite{valtr2005strictly} prove negative results (see also the earlier work \cite{iosevich2007distance} by Iosevich and the second author in the Falconer conjecture context, as well as the recent paper \cite{guth2018falconer} by Guth et al.).
    For more relatives of the distinct distances problem over the reals we recommend the survey of Sheffer \cite{sheffer2014distinct}.
    
    The distinct distance problem can also be posed over arbitrary fields $\F$, where much less is known about the tools used and developed by Guth and Katz.
    Setting up the notation, the distance $d$ between two points $x=(x_1,x_2), y=(y_1,y_2)$ in the plane $\F^2$ is
    \[
        d\left(x,y\right):= (x-y)\cdot(x-y) =\|x-y\|^2 = (x_1-y_1)^2+ (x_2-y_2)^2.
    \]
    Typically, $\F$ will be a finite field of order $q$, where $q=p^r$ is an odd prime power, though some results hold in arbitrary fields $\F$ with constraints in terms of the characteristic of $\F$.
    
    Distinct distances correspond to equivalence classes of pairs of points in $\F^2$ modulo the action of rigid motions.
    We want to find a lower bound on the cardinality $\Delta(A)$ of the set of distances determined by a finite point set $A\subset \F^2$, where we define $\Delta(A):=|\{d(a,b)\colon a,b\in A\}|$.

    A sensible interpretation of the distance problem over a general field $\F$ may be more subtle than over the reals.
    If $\F$ is finite, then $\Delta(A)\leq |\F|$ for any $A$, so $\Delta(A)\gg |A|/\log |A|$ cannot hold unconditionally.
    In fact, if $\F$ has characteristic $p$, then there are subsets $A\subseteq\F^2$ with $\Delta(A)\leq p$.
    For this reason we constrain the cardinality of $A$ in terms of the characteristic $p$ of $\F$ if $p>0$. 
    Another issue is that of \emph{isotropic lines}\/ in the plane: if $A$ contains points exclusively of the form $(a,{\rm i}a)$ in $\C^2$, then the only distance between pairs of points of $A$ is $0$.
    This may happen over finite fields $\F_q$ with $q\equiv 1\bmod{4}$.
    
    Turning our first obstruction on its head, we expect that if $A\subseteq \F^2_q$ is sufficiently large relative to $q$, then all, or at least a positive proportion of possible distances  should be determined.
    Determining the threshold at which this occurs is often referred to as the Erd\H{o}s-Falconer problem \cite{iosevich2007distance}.
    Iosevich and the second author \cite{iosevich2007distance} initiated this point of view, asking for the corresponding lower bounds on $|A|$.
    Some obstructions were identified by Hart et al. \cite{hart2011averages-a} and Petridis and the first author \cite{MP_Falc}: when $|A| \leq q^{4/3}$, subfields may preclude $|A| > q/2$.
    For other recent developments in this direction we refer the reader to Koh, Pham and Vinh \cite{koh2018distance} and references therein.
    
    The first result on the Erd\H{o}s distinct distance problem over $\F_q$ was obtained by  Bourgain, Katz and Tao \cite{bourgain2005sum-product}, who proved a non-quantitative non-trivial bound on $\Delta(A)$, based on a non-trivial Szemer\'edi-Trotter type theorem, which in turn followed from their sum-product estimate.
    A strong quantitative variant of this theorem, due to  de Zeeuw and the third author \cite{stevens2017improved}, implies the bound $\Delta(A)\gg |A|^{8/15}$ under suitable conditions on $|A|$.
    Iosevich, Koh and Pham \cite{iosevich2019perspective} strengthen this technique by using bounds on the additive energy of a set lying on a paraboloid, improving the exponent to $\frac{1}{2} + \frac{69}{1558}$.
    This result is valid for $\F=\F_p$ and $p\equiv 3\bmod{4}$.
    
    The distinct distance problem has a stronger variant, known as the pinned distance problem. 
    The pinned distance problem asks for the existence of a point $a\in A$, from which many distinct distances are determined. It is open over the reals as well, the last series of progress having been achieved back in the early 2000s, with the standing record by Katz and Tardos \cite{katz-tardos}.
    
    This note proves new bounds on the pinned distance problem over a general field $\F$, building on work by Lund and Petridis~\footnote{We are grateful to Ben Lund and Giorgis Petridis for clarifications of their results and discussions throughout the preparation of this note.}\cite{lund2018pinned}, which further developed the earlier approach \cite{chapman2012pinned,hanson2016distinct} of studying perpendicular bisectors to achieve the following result.
    
    \begin{thm}[Lund-Petridis \cite{lund2018pinned}]\label{t:lp}
        Let $\F$ be a field, and $A\subseteq \F\times \F$. 
        If the characteristic of $\F$ is $p>0$, suppose also that  $|A|\leq p^{8/5}$.
        
        Then there exists $a\in A$ such that $|\{d(a,b)\colon b\in A\}| \gg |A|^{20/37}$, and in particular, $\Delta(A) \gg |A|^{20/37}$.
    \end{thm}
    We also note that Petridis \cite{petridis2016pinned} has proved a stronger result, on the assumption that $A\subseteq \F\times \F$ is a Cartesian product $A=X\times X$, for any $X\subseteq\F$ satisfying $|X|\ll \mathrm{char}(\F)^{2/3}$.
    This result can be proved directly from Rudnev's point-plane incidence bound \cite{rudnev2018point-plane}, or by descendants of this bound \cite{yazici2015growth,murphy2017second}.

\subsection{Main Result}
    We prove a lower bound on the number of pinned distances $\pind(A)$ determined by $A$, where 
    \[
    \pind(A):= \max_{a\in A}|\Delta(A,a)|,
    \]
    where $\Delta(A,a):=\{d(a,b)\colon b\in A\}|$.
    
    It is clear that $\Delta(A) \geq \pind(A)$.  With this notation, Theorem~\ref{t:lp} can be restated as $\pind(A)\gg |A|^{20/37}$. We now present the main results of this paper. 
    
    \begin{thm}\label{t:main-general}
        Let $A\subset \F^2$ be a set of points, at most a third of which lie on a single isotropic subspace.
        If the characteristic of $\F$ is $p>0$, suppose also that  $|A|\leq p^{4/3}$.
        Then
        \begin{equation}\label{e:distances-general}
            \pind(A) \gg |A|^{2/3}.
        \end{equation}
    \end{thm}
    Theorem~\ref{t:main-general} is an immediate consequence of Proposition~\ref{prop:isosceles}, below, and a Cauchy-Schwarz argument.
    
    Unfortunately, there is still a gap between the estimates of Theorem \ref{t:main-general} and what is known for sufficiently large sets $A$ for the Erd\H os-Falconer problem over finite fields $\F_q$ ($p$ and $q$ are always assumed to be odd and sufficiently large). Chapman et al. \cite[Theorem 2.2]{chapman2012pinned}  used Fourier analysis to show that for any $|A|\gg q^{4/3}$, $\Delta(A)\gg q$. The latter paper claimed the bound for $q \equiv 3\bmod{4}$ only; it was then observed by  Bennett et al. \cite[Theorem 1.6]{bennett2017group} that the same proof (which is, in fact, a variant of \cite[Proposition 4.29]{tao2010additive}) works for $q \equiv 1\bmod{4}$ as well.
    Furthermore, Hanson, Lund and Roche-Newton \cite{hanson2016distinct} extended the claim to $\pind(A)$, using spectral graph theory, applied to studying perpendicular bisectors, the initial set-up being similar to that of Theorems \ref{t:lp}, \ref{t:main-general}.

\subsection{Discussion of techniques}
    To prove Theorem \ref{t:main-general}, we follow the established technique of studying distances through the perpendicular bisectors of points in $A$.
    The perpendicular bisector of points $a,b\in \F^2$ with $d(a,b)\neq 0$ is the line
    \[
        \bisector(a,b) =\{x\in \F^2 \colon d(a,x) = d(b,x)\}.
    \]
    We consider (a subtle variant of) the \emph{bisector energy}\/ of the set $A$, which is the number of pairs of points whose perpendicular bisector coincide:
    \[
        \B(A):=|\{(a,b,c,d)\in A^4\colon\bisector(a,b) = \bisector(c,d)\}|.
    \]
    The variant of the bisector energy that we use --- $\B^*(A)$ in Section~\ref{sec:bisect-energy-isosc} --- allows us to disregard the delicacies that arise from isotropic lines.
    
    The bisector energy controls the number of \emph{isosceles triangles}\/ in $A$, and upper bounds for the number of isosceles triangles in $A$ yield lower bounds for $\pind(A)$.
    Lund and Petridis show quantitatively that if the bisector energy is large, then $A$ contains many collinear points or many co-circular points \cite[Theorem 2]{lund2018pinned}, thus either the number of isosceles triangles is small, or the set $A$ has structure.
    
    To prove our bound on the modified bisector energy $\B^*(A)$, we use the \emph{kinematic mapping}\/ of Blaschke and Gr\"unwald to embed the space of segments of the same length into projective three-space.
    The bisector energy in a class of $n$ segments of the same length is then represented by the number of incidences between $n$ points and $n$ planes, which we bound using the point-plane incidence theorem of the second author.
    
    To be precise, we use $S_r=S_r(A)$ to denote the set of pairs of points of distance $r$ apart:
    \[
      S_r = S_r(A) :=\{(a,b)\in A^2\colon d(a,b)=r\}.
    \]
    The modified bisector energy $\B^*(A)$ is equal to the sum over $r\not=0$ of the number of pairs of segments in $S_r(A)$ that are \emph{axially symmetric}\/ (plus an error term for isotropic segments).
    As mentioned above, we count the number of such pairs by representing it as a point-plane incidence count in projective three-space; see Claim 1 below.
    From this it follows that
    \[
        \B^*(A)\ll \sum_{r\not=0}|S_r|^{3/2} \ll |A| \left( \sum_{r\not=0}|S_r|^2 \right)^{1/2},
    \]
    unless $A$ has many collinear or co-circular points.
    (In the proof, we assume without loss of generality that $\F$ is algebraically closed, since we may embed $A$ into the algebraic closure of $\F$ without decreasing the quantities we wish to bound.)

\subsection*{Notation} 
    We use standard asymptotic notation: $f(n)\ll g(n)$ or $f=O(g)$ means that there exists a constant $c>0$ that does not depend on $n$ so that $f(n)\leq c|g(n)|$.
    The relation $f\gg g$ is equivalent to $g\ll f$.
    The constant implicit in this notation may freely change from line to line. 
    
    We $p$ to denote an odd prime, and we use $\F_q$ to denote a field of prime power cardinality, with $q=p^r$ for some $r>0$.
    We often simply write $\F$ for a field, which is often, but not necessarily $\F_q$.
    Most hypotheses involving a set $A\subseteq\F_q^2$ include a constraint in terms of the characteristic $p$; for instance $|A|\leq p^c$, for some $c>0$.
    We always state the this constraint as a $\leq$ bound, noting that if it happens to be satisfied up to an absolute constant, the only effect would be a change in the implicit constants in the conclusion.
    
    Also, for results pertaining to finite fields $\F_p$ and $\F_q$, the prime $p$ is to be sufficiently large to dominate the suppressed universal constants.

\section{Preliminaries}
\subsection{Distance preserving transformations}
    Let $\SO\subseteq\SL$ denote the set of unit determinant linear transformations preserving the distance:
    \[
        \SO:=\{g\in \SL\colon \forall x,y\in\F^2,\,d(x,y)=d(gx,gy)\}.
    \]
    As a matrix group,
    \[
        \SO  =\left\{
        \begin{pmatrix}
            u & - v \\
            v & u\\
        \end{pmatrix}:
        u,v\in \F, u^2+ v^2=1 \right\}.
    \]
    We will use the notation $\Cc\subseteq \F^2$ for the unit circle, and write $(u,v)\in \Cc$.
    As is the case for rotations acting on circles in $\R^2$, the group $\SO$ acts simply transitively on the level sets $\{(x,y)\in\F^2\colon d(x,y)=t\}$ for all $t\not=0$.
    Thus $d(x,y)=d(x',y')$ if and only if there is a rotation $\theta\in \SO$ such that $\theta x - \theta y = x'-y'$.
    
    Let $T_2(\F)$ be the group of translations $x\mapsto x+t$ acting on the plane $\F^2$. 
    The group $\SF$ of positively oriented rigid motions of $\F^2$ is generated by $\SO$ and $T_2(\F)$; this is the analogue of the special Euclidean group $\mathrm{SE}_2(\R)$. 
    
    Explicitly, an element of $\SF$ has the form: 
    \begin{equation}\label{matrix}
      \begin{pmatrix}
        u&- v&s\\
        v&u&t\\
        0&0&1
           \end{pmatrix},
       \quad \mbox{where $u^2+ v^2 = 1$}.
    \end{equation}
    
    By the above discussion, we see that $d(x,y)=d(x',y')$ if and only if there exists $g\in \SF$ such that $g(x,y)=(x',y')$.
    If such a $g$ exists, an easy calculation shows that it is unique.

\subsection{Blaschke-Gr\"unwald Kinematic Mapping}
    The Blaschke-Gr\"unwald kinematic mapping \cite{blaschke1960kinematik, Grunwald1911abbildungsprinzip} assigns to each element
    $g\in \mathrm{SE}_2(\R)$ %, given by a matrix \eqref{matrix} -- a composition of a rotation and translation --
    a point in projective space $\mathbb{P}\R^3$.
    For a detailed exposition concerning this mapping and its properties, see the textbook by Bottema and Roth \cite[Chapter 11] {bottema1990theoretical}.
    The kinematic mapping was rediscovered some 100 years later by Elekes and Sharir \cite{elekes2011incidences} and played an essential role in the resolutions of the Erd\H{o}s distinct distance problem in $\R^2$ by Guth and Katz \cite{guth2015erdos}.
    
    The definition of the original \BG{} kinematic mapping extends to all fields that are closed under taking square roots.
    The reason for this is the necessity to have well-defined  ``half-angles'': for all $(u,v)\in \mathcal C$ (the unit circle), we may resolve the system of quadratic equations
    \begin{equation}\label{trig}
        u = \tilde u^2 - \tilde v^2,\qquad v = 2\tilde u \tilde v
    \end{equation}
    to find another point $(\tilde u, \tilde v)\in \mathcal C$.
    Since we use projective coordinates, it does not matter which of the two roots of the equation $\tilde u^2 = \frac{1+u}{2}$ one chooses for $\tilde u$, and this choice, once made, defines $\tilde v$ unambiguously.
    With these preliminaries in hand, we may define the Blaschke-Gr\"unwald kinematic mapping, which embeds $\SF$ in $\PF$: an element of $\SF$ of the form of \eqref{matrix} becomes the projective point:
    \begin{equation}\label{bgr}
        [X_0:X_1:X_2:X_3] = [2 \tilde u: 2 \tilde v: s  \tilde u  +  t \tilde v : s \tilde v - t  \tilde u].
    \end{equation}
    Note that the mapping  \eqref{bgr} does not depend on the sign choice in the half-angle formulae \eqref{trig}.
    Conversely,
    \begin{equation}\label{bgrinv}
        u = \frac{X_0^2-X_1^2}{X_0^2+X_1^2},\; v=  \frac{2X_0X_1}{X_0^2+X_1^2} ,\; \frac{s}{2} = \frac{X_1X_3+X_0X_2}{X_0^2+X_1^2}, \; \frac{t}{2}=\frac{X_1X_2-X_0X_3}{X_0^2+X_1^2}.
    \end{equation}
    
    If $\F$ is a field where some elements do not have square roots, we can use projectivity to avoid them.
    If $\tilde u\not=0$, we may multiply the coordinates of the left hand side of \eqref{bgr} to find
    \[
        [X_0:X_1:X_2:X_3] = [2(u+1) : 2v : s(u+1)+  tv : s v - t(u+1)].
    \]
    If $\tilde u=0$, then $u=-1$ and $\tilde v=\pm 1$, so the formula in the previous equation is still correct.

    Observe that the image of the kinematic mapping $\kappa$, is $\mathbb{PF}^3\setminus \{X_0^2+X_1^2=0\}$. That is one removes from $\mathbb{PF}^3$ the exceptional set, which is a line if $-1$ is not a square and is a union of two planes if $-1$ is a square.
    
    The kinematic mapping has a number of remarkable properties, however, the easiest way to derive these properties is by studying a certain Clifford algebra.
    Since we do not have a reference for these computations over arbitrary fields, we carry them out in Appendix~\ref{sec:clifford-calc}.
    
    The most important property of $\kappa$ for this paper is that translation in the group $\SF$ corresponds to a \emph{projective transformation}\/ of \PF.
    \begin{prop}
      \label{prop:kinematic-equivariant}
      For all $g\in\SF$ there are projective maps $\phi_g\colon \PF\to \PF$ and $\phi^g\colon \PF\to \PF$ such that for all $x\in\SF$
      \[
    \kappa(gx) =\phi_g(\kappa(x))\andd \kappa(xg)=\phi^g(\kappa(x)).
        \]
    \end{prop}
    The proof of this proposition is part of Corollary~\ref{cor:1} in Appendix~\ref{sec:clifford-calc}.
    
    As a corollary, we see that the set of all rigid motions mapping one fixed point to another fixed point corresponds to a projective line.
    For points $x$ and $y$ in $\F^2$, let $T_{xy}$ denote the set of $g\in\SF$ such that $gx=y$.
    \begin{cor} \label{cor:transporters}
        For all $x$ and $y$ in $\F^2$, the image $\kappa(T_{xy})$ is a projective line.
    \end{cor}
    \begin{proof}
        The image of the rotation subgroup $\SO$ is $X_2=X_3=0$, which is a projective line.
        By the transformation properties, all conjugate subgroups of $\SO$ are projective lines, and all cosets of these groups are lines.
        But the set $T_{xy}$ is a left coset of the stabiliser of $x$, which is conjugate to $\SO$.
    \end{proof}

\subsection{Isotropic lines} \label{sec:isotropy}
    In arbitrary fields, there may exist a set of (isotropic) points whose pairwise distance is 0. This is an obvious obstruction to obtaining a lower bound on $\pind(A)$, and so we have to consider these points separately. 
    
    A vector $v\neq 0\in \F^2$ is \emph{isotropic} if $d(v,v)=0$. If ${\rm i}:=\sqrt{-1}\in \F$, then $\F$ contains isotropic vectors. In particular, we note that when $p\equiv 3\mod(4)$, then $-1$ is not a square so there are no isotropic vectors. 
    Give a finite point set $A$, an oriented segment is a pair $(a,a')\in A^2$ with length $d(a,a')$. If $d(a,a')=0$, the segment is called isotropic; when $a\neq a'$ we say that $(a,a')$ is a non-trivial isotropic segment. A non-trivial isotropic segment lies on an isotropic line with slope $\pm {\rm i}$.
    
    Isotropic line segments should be excluded from counts, for there may be too many of them: a single isotropic line supporting $N$ points contains $\gg N^2$ zero-length segments. 
    
    Among other facts on isotropic lines, it's easy to see that perpendicular bisectors are not isotropic  \cite[Corollary 8]{lund2018pinned}.

\subsection{Axial Symmetries}\label{sec:axial}
    As in the Euclidean case, $\SF$ has index two in the group of all distance-preserving transformations.
    The other coset of $\SF$ consists of compositions of reflection through some (non-isotropic) line, and a translation parallel to this line.
    We call a reflection through a non-isotropic line an \emph{axial symmetry}.
    The coset of $\SF$ contains, in particular, the set of axial symmetries.
    
    We define axial symmetries relative to non-isotropic lines only, since if $\ell$ is isotropic, then $a$ being symmetric to $a'$ relative to $\ell$ means that $a-a'$ is normal to $\ell$, and also that for any $b\in \ell$, $d(a,b) = d(a',b)$.
    However, if $\ell$ is isotropic, this means that $a,a'$ lie on $\ell$.
    
    For $x,y\in \F^2$, we write $x\sim_\ell y$ to mean that $x$ is axially symmetric to $y$, relative to the (non-isotropic) line $\ell$. 
    
    The composition of two axial symmetries, relative to distinct lines $\ell$ and $\ell'$, as in the Euclidean case, is generally a rotation around the axes intersection point, by twice the angle between the lines. If the lines are parallel, it is a translation in the normal direction (note that $\ell,\ell'$ are non-isotropic lines).
    
    In the sequel, for convenience of working within the group structure of $\SF$, rather than its other coset, we map the set of all axial symmetries into the group $\SF$.
    We map an axial symmetry to $\SF$ by composing it with the fixed axial symmetry $\rho$ relative to a non-isotropic subspace $\ell_\tau$.
    
    The image of the set of axial symmetries under this mapping is the set of rotations around all points on $\ell_\tau$, which we denote by $R_\tau$.
    
    If $\ell_\tau$ is the $x$-axis, then explicitly
    \[
        R_\tau = \left\{\begin{pmatrix}
            u&-v& x_0(1-u)\\
            v&u&-x_0v\\
            0&0&1
        \end{pmatrix} \colon u^2 + v^2 = 1, u,v,x_0\in \F
        \right\}\,.
    \]
    A short calculation shows that, for this choice of $\ell_\tau$, the image of $R_\tau$ under the kinematic mapping is contained in the plane $X_2=0$.
    By Proposition~\ref{prop:kinematic-equivariant}, we see that $R_\tau$ is contained in a plane for any choice of $\ell_\tau$.
    This transformation motivates the role of incidence geometry.

\subsection{Incidence Geometry}\label{sec:incidences}
    The key tool that we will use to estimate $\pind{A}$ is an incidence bound between points and planes in $\mathbb{FP}^3$ by the second author \cite{rudnev2014number}; for a selection of applications of this bound, see the second author's survey \cite{rudnev2018point-plane}.
    
    \begin{thm}[Points-Planes in $\mathbb{FP}^3$] \label{t:rudnev}
        Let $\mathcal{P}$ be a set of points in $\F^3$ and let $\Pi$ be a set of planes in $\mathbb{FP}^3$, with $|\P|\leq |\Pi|$. 
        If $\mathbb{F}$ has positive characteristic $p$, suppose that  $|\P|\ll p^2$. 
        Let $k$ be the maximum number of collinear points in $\mathcal{P}$. Then
        \[
            \I(\mathcal{P},\Pi)\ll |\P|^{1/2}|\Pi| +k |\Pi|.
        \] 
    \end{thm}
    
    The proof of Theorem~\ref{t:main-general} proceeds by first relating the quantity $\pind(A)$ to the count of isosceles triangles. We count the number of isosceles triangles by studying the set of bisectors determined by $A$ considering the axial symmetries relative to this line set. Using the \BG{} embedding, we rephrase this as an incidence bound between points and planes. 
    
    \section{Bisector energy and isosceles triangles}\label{sec:bisect-energy-isosc}
    
    Let $\mathcal T =\mathcal T(A)$ be the number of \emph{non-degenerate isosceles triangles}\/ with vertices in $A$.
    A non-degenerate isosceles triangle means a triple $(a,b,b')$ with $d(a,b)=d(a,b')$ and the base  $b-b'$ non-isotropic.
    We use $\T(A)$ to denote the number of such triangles with $d(a,b),d(a,b')\not=0$.
    
    The number of isosceles triangles determined by $A$ is inversely proportional (from below) to the number of pinned distances determined by $A$.
    \begin{lem}
      \label{lem:pin-iso-lower-1}
    If $A$ is a subset of $\F^2$ with at most $M$ points on a line, then
    \[
    |A|(|A|-2M+1)^2\leq (\pind(A)+1)(\T(A)+|A|^2).
    \]
    \end{lem}
    Lund and Petridis prove this lemma as part of the proof of their main theorem \cite[p.10]{lund2018pinned}, but we give the proof here, since it is fundamental.
    \begin{proof}
    Write $C_r$ for the set of points in $\F^2$ of distance $r$ from the origin, and write $\Delta(A,a)$ to denote the set of non-zero distances determined by $a$.
    Then
    \[
    |A|(|A|-2M+1)\leq |A||A\setminus (a+C_0)| = \sum_{a\in A}\sum_{r\in \Delta(A,a)}|A\cap (a+C_r)|,
    \]
    so by Cauchy-Schwarz and the bound $|\Delta(A,a)|\leq\pind(A)+1$, we have
    \[
    |A|(|A|-2M+1)^2\leq (\pind(A)+1) \left( \sum_{a\in A}\sum_{r\not=0} |A\cap (a+C_r)|^2\right).
    \]
    We have
    \begin{equation}
      \label{eq:5}
      \sum_{a\in A}\sum_{r\not=0} |A\cap (a+C_r)|^2=\T(A)+|A|^2,
    \end{equation}
    since the sum on the left hand side is equal to $\T(A)$ plus the number of triples $(a,b,b')$ in $A^3$ such that $d(a,b)=d(a,b')\not=0$ and $b-b'$ is isotropic.
    If $b\not=b'$, then by Lemma 6 of Lund and Petridis \cite{lund2018pinned}, we have $d(a,b)=d(a,b')=0$, which is a contradiction, so there are $|A|^2$ such triples.
    \end{proof}

    Let $i_A(\ell)=|A\cap \ell|$ denote the number of points of $A$ incident to the line $\ell$, and let $b_A(\ell)$ denote the number of pairs of points $a,b\in A$ whose perpendicular bisector $\bisector(a,b)$ is $\ell$.
    The quantity $b_A(\ell)$ is equal to the number of points $a$ in $A$ such that there exists an $a'$ in $A$ that is symmetric to $a$ with respect to $\ell$.
    We introduce a modified quantity $b^*_A(\ell)$, which counts the number of such $a$ \emph{outside of $\ell$}:
    \[
        b^*_A(\ell):=|\{a\in A\setminus\ell\colon (\exists a'\in A)(a'\sim_\ell a)\}|.
    \]
    If $A$ contains an isotropic vector $a'\not=0$ with $d(a',a')=0$, then for all $a\in\ell$, the perpendicular bisector of $a$ and $a'$ is $\ell$.
    So if $N=|A\cap C_0|-1$ is the number of isotropic vectors in $A$, we have
    \[
        b_A(\ell)=i_A(\ell)N+b^*_A(\ell).
    \]
    
    The \emph{bisector energy}\/ of a set $A$ in $\F^2$ is the second moment of $b_A$:
    \[
        \B(A):=|\{(a,b,a',b')\in A^4\colon \bisector(a,b)=\bisector(a',b')\}|=\sum_\ell b_A(\ell)^2.
    \]
    We write $\B^*(A)$ for the second moment of $b_A^*(\ell)$; this modified bisector energy allows us to avoid pathologies arising from isotropic vectors.
    
    Our next lemma bounds $\T(A)$ in terms of $\B^*(A)$.\fxnote{we need to do something about the case where $A$ contains isotropic points, since $b_A(\ell)$ may be quite large}
    \begin{lem}  \label{lem:bisectors-control-triangles}
        If $A$ is a subset of $\F^2$, then
        \[
            \T(A) \leq |A|\B^*(A)^{1/2}.
        \]
    \end{lem}
    \begin{proof}\fxnote{I am using isotropic to mean $d(b,0)=0$, including $b=0$}
        Let $(a,b,b')$ be a non-degenerate triple, so that $d(a,b)=d(a,b')\not=0$ and $b-b'$ is not isotropic.
        Since $d(b,b')\not=0$, the perpendicular bisector $\ell=\B(b,b')$ is well defined, $a\in\ell$, and $b,b'\not\in\ell$. \fxnote{do we discuss perpendicular bisectors somewhere? we include the fact that they are not isotropic in 2.3...}
        Thus
        \begin{equation}
          \label{eq:T-sum}
          \T(A)=\sum_{\ell}i_A(\ell)b_A^*(\ell).
        \end{equation}
        To complete the proof, we apply the Cauchy-Schwarz inequality. 
    \end{proof}
      
    Let $\Q$ denote the number of non-zero distance quadruples:
    \[
      \Q(A):=|\{(a,b,a',b')\in A^4\colon d(a,b)=d(a',b')\not=0\}|,
    \]
    which satisfies the following equation
    \begin{equation}
      \label{eq:quadruples}
        \Q(A)= \sum_{r\not=0} \left( \sum_{a\in A}|A\cap (a+C_r)| \right)^2.
    \end{equation}
    Our first main technical result bounds the bisector energy of $A$ in terms of $\Q(A)$.
    \begin{prop}
      \label{prop:bisectors}
        Let $\F$ be a field of characteristic $p$.
        Suppose that $A\subseteq\F^2$ has cardinality $|A|\leq p^{4/3}$ and let $M$ denote the maximum number of collinear or co-circular points of $A$.
        Then the bisector energy of $A$ satisfies
        \[
            \B^*(A)\ll M|A|^2 + |A| \Q(A)^{1/2}.
        \]
    \end{prop}
    By Lemma~\ref{lem:bisectors-control-triangles}, this implies a bound on $\T(A)$.
    There are sets $A$ with $\B^*(A)\gg M|A|^2$ \cite[Section 3.4]{lund2016bisector}, however we can remove the dependence on $M$ from the bound on $\T(A)$; this is our second main technical result.
    \begin{prop}
      \label{prop:isosceles}
        Let $\F$ be a field of characteristic $p$.
        If $A\subseteq\F^2$ has cardinality $|A|\leq p^{4/3}$, then the number of non-degenerate isosceles triangles determined by $A$ satisfies
        \[
            \T(A) \ll |A|^{7/3}.
        \]
    \end{prop}
    Theorem~\ref{t:main-general} follows immediately from this proposition and Lemma~\ref{lem:pin-iso-lower-1}.
    
    The following formula bounds the number of distance quadruples in terms of the number of isosceles triangles:
    \begin{equation}
      \label{eq:Q-less-than-A-T}
        \Q(A)= \sum_{r\not=0} \left( \sum_{a\in A}|(a-A)\cap C_r| \right)^2 
        \leq |A|\sum_{r\not=0} \sum_{a\in A}|(a-A)\cap C_r|^2= |A|\T(A).
    \end{equation}
    Combined with Proposition~\ref{prop:isosceles}, equation~\eqref{eq:Q-less-than-A-T} implies a new bound on the number of distance quadruples.
    \begin{cor}
      \label{cor:quad-10-3}
        If $\F$ is a field of characteristic $p$ and $A\subseteq\F^2$ has cardinality $|A|\leq p^{4/3}$, then
        \[
            \Q(A)\ll |A|^{10/3}.
        \]
    \end{cor}
    Equation~\eqref{eq:Q-less-than-A-T} plays an important role in the proof of Proposition~\ref{prop:isosceles}. We thank Giorgis Petridis and Thang Pham for pointing out that \eqref{eq:Q-less-than-A-T} could be used to improve our original proof, resulting in a lower bound of $\pind(A)\gg |A|^{2/3}$ for $|A|\leq p^{4/3}$.
    From this observation, we are able to prove the slightly stronger energy estimate of Proposition~\ref{prop:isosceles}.

\section{Proof of Proposition~\ref{prop:bisectors}}
    Let $S_r\subseteq A\times A$ denote the set of segments of length $r$:
    \[
        S_r:=\{(a,a')\in A\times A\colon d(a,a')=r\}.
    \]
    We have 
    \[
        |S_r|=\sum_{a\in A}|A\cap (a+C_r)|,
    \]
    so
    \[
        \sum_{r\not=0}|S_r|=|A||A\setminus C_0|\andd \sum_{r\not=0}|S_r|^2 = \Q(A).
    \]
    
    Let $\mathrm{Ax}_{(c,d)}$ be the set of elements $(x,y)\in \F^2$ that are axially symmetric to $(c,d)\in \F^2$ (with respect to some non-isotropic line).
    For a set $X\subseteq A\times A$ containing no isotropic segments (that is, $\norm{a-b}\not=0$ for all $(a,b)\in X$, let $\mathcal{A}(X) := \{\mathrm{Ax}_x\colon x\in X\}$ be the set of elements attainable from $X$ via axial symmetries. 
    Then, letting $\calL_\perp(A)$ denote the set of non-isotropic perpendicular bisectors of $A$, we have
    \begin{align*}
    \B^*(A) &=\sum_\ell b_A^*(\ell^2)\\
       &=|\{(a,b,c,d,\ell)\in A^4\times \calL_\perp(A) \colon (a,b)\sim_\ell (c,d)\}|\\
       & = \sum_{r\neq 0}|\{((a,b),(c,d)\in S_r^2 \colon (a,b)\in\mathrm{Ax}_{(c,d)}\}| + |\{((a,b),(c,d)\in S_0^2 \colon (a,b)\sim_\ell(c,d)\}|\\
       &=\sum_{r\neq 0}\I(S_r, \mathcal{A}(S_r)) + \calE,
     \end{align*}
    where the last line is a definition.
    
    The error term $\calE$ can be bounded by $2M|A|^2$, since for each $a$, there are at most $2M$ choices of $b$ such that $a-b$ is isotropic; if $a$ and $b$ are chosen, and $c$ is axially symmetric to $a$, then there is only one choice for $d$, which gives the claimed bound.
    
    Let us prove the following claim.
    \begin{claim}
        \label{claim}
        Let $r\neq 0$, and suppose, if $\F$ has positive characteristic $p$, that $|A|\leq p^{4/3}$.
        Suppose that at most $M$ points of $A$ are collinear or co-circular in $\F^2$.
        Then
        \[
            \I(S_r,\mathcal{A}(S_r))\ll M|S_r| + |S_r|^{3/2}.
        \]
    \end{claim}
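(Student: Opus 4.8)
The plan is to realize $\I(S_r,\mathcal A(S_r))$ as an incidence count between points and planes in $\mathbb{FP}^3$ and then invoke Theorem~\ref{t:rudnev}. First I would fix a length $r\neq 0$ and recall that $\SO$ acts simply transitively on segments of length $r$: for each $(a,b)\in S_r$ there is a unique rigid motion carrying some fixed reference segment of length $r$ onto $(a,b)$. Dually, the condition $(a,b)\sim_\ell(c,d)$ says that $(a,b)$ and $(c,d)$ are related by an axial symmetry. Composing with the fixed axial symmetry $\rho$ relative to $\ell_\tau$ (as in Section~\ref{sec:axial}) turns each axial symmetry into an element of $R_\tau$, the set of rotations about points of $\ell_\tau$; and we saw that $\kappa(R_\tau)$ lies in a plane of $\mathbb{FP}^3$. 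The upshot I want is: the pair $((a,b),(c,d))\in S_r^2$ with $(a,b)\in \mathrm{Ax}_{(c,d)}$ corresponds to an incidence between the point $\kappa(g_{(a,b)})$ (where $g_{(a,b)}$ is the rigid motion associated to $(a,b)$) and a plane determined by $(c,d)$ — namely the image under $\kappa$, after the translation/composition bookkeeping of Proposition~\ref{prop:kinematic-equivariant}, of the coset of $R_\tau$ consisting of motions sending $(a,b)$-data to $(c,d)$-data. So I would set $\mathcal P = \{\kappa(g_{(a,b)}) : (a,b)\in S_r\}$ and $\Pi = \{\Pi_{(c,d)} : (c,d)\in S_r\}$, each of size $|S_r|$, and get $\I(S_r,\mathcal A(S_r)) = \I(\mathcal P,\Pi)$.

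Next I would apply Theorem~\ref{t:rudnev} with $|\mathcal P| = |\Pi| = |S_r|$. The characteristic hypothesis $|S_r|\ll |A|^2 \ll p^{8/3}$ is not quite $\ll p^2$, so here I'd instead note that $|S_r| \le |A|^2$ but the relevant incidence count is governed by a point set that we may take of size $\ll |A| \le p^{2/3}$ after a further reduction, OR — more cleanly — observe that the standard way this is applied (cf. \cite{lund2018pinned}) only needs $|A|\le p^{4/3}$ because one of the two sides in the incidence bound is really indexed by points of $A$, not pairs; I would follow that route so that the constraint in the Claim matches. Theorem~\ref{t:rudnev} then gives $\I(\mathcal P,\Pi)\ll |S_r|^{1/2}|S_r| + k|S_r| = |S_r|^{3/2} + k|S_r|$, where $k$ is the maximum number of collinear points in $\mathcal P$.

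The final and most delicate step is bounding $k$, the maximum number of points of $\mathcal P=\kappa(\{g_{(a,b)}\})$ on a projective line, by $O(M)$. By Corollary~\ref{cor:transporters}, $\kappa$ sends each transporter set $T_{xy}$ to a line; conversely I need to understand which configurations of segments $(a,b)\in S_r$ can map to a common line in $\mathbb{FP}^3$. The expectation, matching \cite{lund2018pinned}, is that a line's worth of such points forces the corresponding $a$'s (equivalently $b$'s) either to be collinear in $\F^2$ or to lie on a common circle — so that at most $M$ of them occur. I would prove this by taking three points of $\mathcal P$ on a line, pulling back via \eqref{bgrinv} to three rigid motions, and showing the induced constraint on the planar points is exactly collinearity or co-circularity; the kinematic-mapping identities from Appendix~\ref{sec:clifford-calc} and the fact that lines in $\kappa$-space correspond to one-parameter families of the form (coset of a conjugate of $\SO$) are what make this work. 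This collinear/co-circular dichotomy is where the hypothesis "at most $M$ points of $A$ collinear or co-circular" is used, and it is the main obstacle — everything else is the now-standard translation into point-plane incidences.
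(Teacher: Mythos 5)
Your overall strategy — encode axial-symmetry incidences between segments of length $r$ as point-plane incidences in $\PF$ via the kinematic mapping, then apply Theorem~\ref{t:rudnev} with $k\ll M$ — is exactly the paper's. However, there is a genuine gap at the step you flag yourself: you need $|\P|\ll p^2$ to invoke Theorem~\ref{t:rudnev}, and you correctly observe that $|S_r|\le|A|^2\ll p^{8/3}$ is too weak, but your proposed fixes (``take the point set of size $\ll|A|$'' or ``one side is indexed by $A$'') do not work — both the point set and the plane set genuinely have cardinality $|S_r|$, not $|A|$. The missing ingredient is the classical bound of Erd\H{o}s: any single nonzero distance $r$ is realized by at most $O(|A|^{3/2})$ ordered pairs, i.e.\ $|S_r|\ll|A|^{3/2}$ (two circles of equal radius meet in at most two points, so a unit-distance/single-distance argument applies). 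Combined with $|A|\le p^{4/3}$ this gives $|S_r|\ll p^2$, which is precisely the needed constraint and explains why the exponent $4/3$ appears in the Claim's hypothesis.

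Two smaller remarks. First, the paper passes to the algebraic closure of $\F$ at the outset; this is needed both to apply the kinematic mapping freely and, more substantively, to guarantee a choice of $\ell_\tau$ such that no product $g^{-1}h$ (with $g,h\in G_r$) fixes a point of $\ell_\tau$ — this ensures the map $g\mapsto\phi_g(\kappa(R_\tau))$ is injective, so $|\Pi|=|S_r|$. Your sketch does not address why the plane set has the right cardinality. Second, for the collinearity bound $k\ll M$, you propose re-deriving the dichotomy (collinear points in $\P$ force collinear or co-circular endpoint configurations in $A$) directly from the kinematic-map identities; this is plausible, but the paper simply cites Lemma~5 of Lund and Petridis \cite{lund2018pinned}, which already contains exactly this geometric fact. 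You need not reprove it, though if you do, you should make sure your argument covers the concentric-circles case (and not only the parallel-lines case) in their Lemma.
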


    \begin{proof}[Proof of Claim~\ref{claim}]
        Passing to an extension of $\F$ can only increase the quantity we seek to bound, so we assume, without loss of generality, that $\F$ is algebraically closed.
        
        We embed the set $S_r$ in $\SF$ by fixing a segment $s_r$ in $S_r$ and identifying an element $(a,a')\in S_r$ with the inverse of the rigid motion that takes $s_r$ to $(a,a')$.
        This rigid motion always exists, for one can translate $a)$ to the origin, and then find the corresponding rotation, for $r\neq 0$. 
        Let $G_r$ denote the set of transformations in $\SF$ corresponding to segments in $A^2$.

        Now we will associate a \emph{projective plane}\/ in \PF{} to each segment in $S_r$.
        Choose $\tau$ so that for all $g,h\in G_r$, the transformation $g^{-1}h$ has no fixed points on $\ell_\tau$; this is possible since $\F$ is algebraically closed, so there are infinitely many choices of $\ell_\tau$, while there are a finite number of products $g^{-1}h$.
        Recall that $R_\tau$ is the set of axial symmetries composed with a reflection about the non-isotropic subspace $\ell_\tau$ and that $\kappa(R_\tau)$ is contained in a projective plane $\kappa(R_\tau)$.
        Let $g$ be the element of $\SF$ corresponding to $(a,a')$.
        By Proposition~\ref{prop:kinematic-equivariant}, the transformation $\phi_g$ is projective, hence the set $\phi_g(\kappa(R_\tau))$ is a projective plane in $\PF$.
        Let $\Pi=\{\phi_g(\kappa(R_\tau))\colon g\in G_r\}$.
        We have $|\Pi|=|G_r|=|S_r|$, since $\phi_g(\kappa(R_\tau))=\phi_h(\kappa(R_\tau))$ if and only if $g^{-1}h\in R_\tau$, but every element of $R_\tau$ fixes a point on $\ell_\tau$, while no product $g^{-1}h$ with $g$ and $h$ in $G_r$ fixes a point on $\ell_\tau$.
        
        Let $G_r'$ denote the set of $g\in\SF$ such that $g^{-1}s_r\in\tau(A)\times\tau(A)$, and set $P=\kappa(G_r')$.
        We will show that
        \[
            \I(S_r,\mathcal{A}(S_r)) = \I(P,\Pi).
        \]
        First note that $|P|=|G_r'|=|S_r|$, since the kinematic mapping is injective.
        Now, suppose that $\pi=\phi_g(\kappa(R_\tau))$ for some $g \in G_r$ and $p=\kappa(h)$ for some $h\in G_r'$.
        If $p\in\pi$, then $\kappa(h)\in \phi_g(\pi)=\phi_g(\kappa(R_\tau))$.
        Thus
        \[
            \kappa(g^{-1}h)=\phi_g^{-1}(\kappa(h))\in\kappa(R_x),
        \]
        so $h\in gR_x$.
        Now, let $(a,a')$ correspond to $g$ and $(b,b')$ correspond to $h$, so that
        \[
            g(a,a')=h(\tau(b),\tau(b'))=s_r.
        \]
        Since $h\in gR_x$, we have
        \[
            (\tau(b),\tau(b'))\in R_x^{-1}(a,a')=R_x(a,a'),
        \]
        thus $(b,b')$ is attainable from $(a,a')$ by an axial symmetry.
        
        We apply Theorem~\ref{t:rudnev} to $P$ and $\Pi$, claiming that the number of collinear  points or planes is bounded by $M$. This is a direct consequence of  Lemma 5 of Lund and Petridis \cite{lund2018pinned}, which states that given two segments $s,s'$ of given length $r$, the endpoints of every segment $s''$, axially symmetric to both $s,s'$ lies on a pair of concentric circles or parallel lines, uniquely defined by $s,s'$, whose endpoints also lie on this pair of circles/lines.
        
        Thus
        \[
            \I(S_r,\mathcal A(S_r) ) = \I(P,\Pi) \ll M|S_r|+|S_r|^{3/2}\,.
        \]
        If $\F$ has positive characteristic $p$, then also we need the estimate $|\Pi|\ll p^2$; since $|\Pi|=|S_r|\ll |A|^{3/2}$ by Erd\H{o}s' bound on the number of times a distance can repeat \cite{erdos1946distances}, we have the required constraint for $|A|\ll p^{4/3}$.
        
        This completes the proof of the Claim.
    \end{proof}

\section{Proof of Proposition~\ref{prop:isosceles}}
\label{sec:proofmain}
    The proof of Proposition~\ref{prop:isosceles} essentially combines Proposition~\ref{prop:bisectors} with equation~\eqref{eq:Q-less-than-A-T}.
    We thank Giorgis Petridis and Thang Pham, who pointed out this idea; previously we used a trivial bound for $\Q(A)$, and improved this bound in some cases by other methods.
    Their observation immediately implied that $\pind(A)\gg |A|^{2/3}$ under the assumptions of Theorem~\ref{t:main-general}; subsequently we improved this to an unconditional bound on $\T(A)$ (and hence on $\Q(A)$, the number of distance quadruples).
    
    In order to get a bound for $\T(A)$ that is independent of $M$, the number of collinear or co-circular points of $A$, we need an addition argument to remove rich lines and circles.
    \begin{lem}[Pruning heavy circles and lines]
      \label{lem:3}
        Suppose that $A$ is the disjoint union of $B$ and $C$.
        If all of the points of $C$ are contained in a circle or a line (denoted $\gamma$), then
        \[
            \T(A)\leq \T(B) + 8|A|^2.
        \]
    \end{lem}
    \begin{proof}
        We have
        \[
            i_A(\ell) = i_B(\ell)+i_C(\ell)
        \]
        and, if $r_\ell$ is the reflection through $\ell$,
        \begin{align*}
            b_A^*(\ell)&=|r_\ell(A\setminus\ell)\cap A|\\
            &\leq b_B^*(\ell)+b_C^*(\ell)+|r_\ell(B\setminus \ell)\cap C|+|r_\ell(C\setminus \ell)\cap B|\\
            &= b_B^*(\ell)+b_C^*(\ell)+2|r_\ell(C\setminus \ell)\cap B|.
        \end{align*}
        Thus
        \begin{align*}
            \T(A)&= \T(B)+\sum_\ell i_C(\ell)b_A^*(\ell) +\sum_\ell i_B(\ell)b_C^*(\ell) +2\sum_\ell i_B(\ell)|r_\ell(C\setminus\ell)\cap B|\\
            &:=\T(B)+I+II+III.
        \end{align*}
        
        If $\gamma$ is a circle, then $I\leq 2|A|^2$, since $i_C(\ell)\leq 2$
        \[
            \sum_\ell b_A^*(\ell)\leq \#(\mbox{perpendicular bisectors of points in $A$})\leq |A|^2.
        \]
        If $\gamma$ is a line, then $i_C(\ell)\leq 1$ except for $\ell=\gamma$, so
        \[
            I \leq |A|^2 +|C||A|\leq 2|A|^2.
        \]
        
        To bound $II$, notice that $b_C^*(\ell)$ is zero, unless $\ell$ goes through the center of $\gamma$ (or unless $\ell$ is perpendicular to $\gamma$); except for the center of $\gamma$, each point of $B$ is incident to at most one line $\ell$ for which $b_C^*(\ell)\not=0$.
        Using the trivial bound $b^*_C(\ell)\leq |C|$, we have
        \[
            II\leq |B||C|+|C|^2,
        \]
        where the second term covered the case where $B$ contains the center of $\gamma$.
        
        To bound $III$, we use that two distinct points can be incident to at most one line and at most two circles of the same radius.
        
        Suppose that $\gamma$ is a circle.
        Let
        \[
            X:=\sum_\ell i_B(\ell)|r_\ell(C\setminus\ell)\cap B|.
        \]
        By Cauchy-Schwarz,
        \[
            X\leq |B| \left( \sum_\ell |r_\ell(C\setminus\ell)\cap B|^2 \right)^{1/2}.
        \]
        The sum in parenthesis is equal to
        \[
            Y:=|\{(\ell,b,b')\colon b,b'\in B, i_B(\ell)>0, b,b'\in r_\ell(C\setminus\ell)\}|.
        \]
        Since $B$ and $C$ are disjoint, we may assume that $r_\ell$ does not fix $\gamma$, so for $b\not=b'$, there are at most two lines $\ell$ so that $b,b'\in r_\ell(C)$.
        Thus
        \[
            Y \leq 2|B|^2 + \sum_{\ell\colon i_B(\ell)>0}|r_\ell(C\setminus\ell)\cap B|\leq 2|B|^2+X.
        \]
        Returning to our initial equation for $X$, we have
        \[
            X^2 \leq |B|^2X+4|B|^4,
        \]
        so $X\leq 4|B|^2$.
        
        Now suppose that $\gamma$ is a line.
        If $r_\ell(\gamma)=\gamma$, then $\gamma$ and $\ell$ are perpendicular (or $\ell=\gamma$).
        Let $X$ be as above, and define
        \[
            X^*:=\sum_{\ell\not\perp\gamma}i_B(\ell)|r_\ell(C\setminus\ell)\cap B|.
        \]
        Since
        \[
            \sum_{\ell\perp\gamma}i_B(\ell)|r_\ell(C\setminus\ell)\cap B| \leq |B||C|,
        \]
        we have $X\leq X^*+|B||C|$.
        
        Let $Y^*$ denote the corresponding quantity arising from Cauchy-Schwarz
        \[
            Y^*:=|\{(\ell,b,b')\colon b,b'\in B, i_B(\ell)>0, \ell\not\perp\gamma, b,b'\in r_\ell(C\setminus\ell)\}|.
        \]
        We may assume that $\ell\not=\gamma$, since then $C\setminus\ell$ is empty, so $r_\ell$ does not fix $\gamma$.
        If $b$ and $b'$ are distinct, then there is at most one $\ell$ so that $b,b'\in r_\ell(\gamma)$.
        Thus
        \[
            X^* \leq |B|\sqrt{Y^*} \leq |B| \left( |B|^2+X^* \right)^{1/2},
        \]
        so $X^*\leq 2|B|^2$ and hence $X\leq 3|B|^2$.
        
        Combining these cases, we see that
        \[
            \T(A)\leq \T(B)+8|A|^2,
        \]
        as claimed.
    \end{proof}
    
    \begin{lem}[Pruning heavy lines and circles]
      \label{lem:1}
        There is a subset $A'\subseteq A$ such that at most $|A|^{2/3}$ points of $A'$ are collinear or co-circular, and
        \[
            \mathcal T(A)\leq \mathcal T(A') + 8|A|^{7/3}.
        \]
    \end{lem}
    \begin{proof}
        Use Lemma~\ref{lem:3} to greedily remove lines and circles, gaining a factor of $8|A|^2$ each time.
        If we only remove lines and circles with more than $|A|^{2/3}$ points on them, then this procedure terminates after $|A|^{1/3}$ steps.
    \end{proof}
    
    Let $M:=|A|^{2/3}$ denote the number of points of $A'$ that are collinear or coplanar.
    By Lemma~\ref{lem:bisectors-control-triangles},
    \[
        \sum_{\ell\in \calL_\perp(A')} i_{A'}(\ell)b_{A'}^*(\ell) \leq 2|A'|^2+|A'|\B^*(A')^{1/2}.
    \]
    By Proposition~\ref{prop:bisectors}, if $|A'|\leq p^{4/3}$, then
    \[
        \B^*(A')\ll M|A'|^2 +|A'|\Q(A')^{1/2} \leq |A|^{8/3}+|A|\Q(A)^{1/2},
    \]
    so
    \[
        \T(A)\ll |A|^{7/3} + \T(A')
    % \ll |A|^{7/3}+|A| \left( |A|^{8/3}+|A|\Q(A)^{1/2} \right)^{1/2}
        \ll |A|^{7/3}+|A|^{3/2}\Q(A)^{1/4}.
    \]
    By equation~\eqref{eq:Q-less-than-A-T}, we have
    \[  
        \T(A) \ll |A|^{7/3} + |A|^{7/4}\T(A)^{1/4},
    \]
    so $\T(A)\ll |A|^{7/3}$, as desired.

\appendix

\section{Clifford algebra computations}
\label{sec:clifford-calc}
    This section is a short digest of Clifford algebras over finite fields.
    We follow Klawitter and Hagemann \cite{klawitter2013kinematic}, who give a similar exposition for Clifford algebras over $\R$.
    We are indebted to Jon Selig, who told us about the connection between Clifford algebras and the \BG{} kinematic mapping.
    
    For a vector space $V$ with a quadratic form $Q$, the Clifford algebra $\cl(V,Q)$ is the largest algebra containing $V$ and satisfying the relation that $x^2=Q(x)$ for all $x\in V$, where $x^2$ is the square of $x$ in the algebra.
    If $V$ is an $n$-dimensional vector space over a finite field $\F$ of odd characteristic, then there is a basis $e_1,\ldots,e_n$ of $V$ such that $Q(e_i)=\lambda_i$, where $\lambda_i$ is one of: 0,1,a non-square.
    This basis is orthogonal with respect to the bilinear form associated to $Q$.
    The Clifford algebra is a $2^n$-dimensional $\F$ vector space with basis $e_{i_1\ldots i_k}$ where $i_1<\cdots<i_k$ and $0\leq k\leq n$ (with the understanding that the empty index is $e_0$) defined by
    \[
        e_{i_1\ldots i_k}=e_{i_1}\cdots e_{i_k}.
     \]
    The rules for multiplication in $\cl(V,Q)$ are given by $e_ie_j=e_je_i$ for $i\not=j$ and $e_i^2=\lambda_i$.
    
    The Clifford algebra $\cl(V,Q)$ splits as a direct sum of exterior products
    \[  
        \cl(V,Q)=\bigoplus_{i=0}^n \bigwedge^i V
    \]
    and is $\Z/2$-graded:
    \[
        \cl(V,Q)=\cl(V,Q)^+\oplus \cl(V,Q)^-,
    \]
    where
    \[
        \cl(V,Q)^+:=\bigoplus_{\substack{i=0\\ i\equiv 0\pmod 2}}^n \bigwedge^i V
            \andd
        \cl(V,Q)^-:=\bigoplus_{\substack{i=0\\ i\equiv 1\pmod 2}}^n \bigwedge^i V.
    \]
    The dimension of the even subalgebra $\cl(V,Q)^+$ is $2^{n-1}$. \fxnote{Klawitter and Hagemann give an isomorphism between the even subalgebra and another universal Clifford algebra, but this must be different in our case...}
    We identify $\bigwedge^0 V$ with $\F$ and $\bigwedge^1 V$ with $V$.
    
    We define two automorphisms of $\cl(V,Q)$.
    The first, called \emph{conjugation}, is denoted by an asterisk.
    For the basis elements of $V$ we define conjugation by $e_i^*=-e_i$.
    We extend conjugation to other basis elements by changing the order of multiplication
    \[
        (e_{i_1}e_{i_2}\cdots e_{i_k})^*:=(-1)^ke_{i_k}\cdots e_{i_2}e_{i_1}\quad 0\leq i_1<i_2<\cdots<i_k\leq n.
    \]
    Finally, we extend conjugation to $\cl(V,Q)$ by linearity.
    One can check that $(\fa\fb)^*=\fb^*\fa^*$ for an elements $\fa,\fb\in\cl(V,Q)$.
    (Notice that if $\fa\in\bigwedge^k V$, then $\fa^*=(-1)^{k(k+1)/2}\fa$.)
    We define the norm of an element $\fa$ by $N(\fa)=\fa\fa^*$.
    
    The second automorphism of $\cl(V,Q)$, called the \emph{main involution}, is denoted by $\alpha$ and defined by $\alpha(e_i)=-e_i$ and extended to $\cl(V,Q)$ by linearity and the rules for multiplication.
    Clearly $\alpha$ fixes the even subalgebra $\cl(V,Q)^+$ and acts by multiplication by $-1$ on the odd subalgebra $\cl(V,Q)^-$.

    Let $\cl^\times(V,Q)$ denote the set of invertible elements of $\cl(V,Q)$, which we call \emph{units}.
    If $N(\fa)=1$, then $\fa\in\cl^\times(V,Q)$.
    If $\fa\in V$ and $N(\fa)\not=0$, then $\fa\in\cl^\times(V,Q)$, and $\fa^{-1}=\fa^*/N(\fa)$.
    The \emph{Clifford group}\/ associated to $\cl(V,Q)$ is defined by
    \[
        \Gamma(\cl(V,Q)):=\left\{\fg\in\cl^\times(V,Q)\colon \alpha(\fg)V\fg^{-1}\subseteq V \right\}.
    \]
    We say that the map $\fv\mapsto \alpha(\fg)\fv\fg^{-1}$ is the \emph{sandwich operator}\/ associated to an element $\fg\in\Gamma(\cl(V,Q))$.
    
    Given a quadratic form $Q_0$ on $\F^2$ with $Q_0(e_1)=1$ and $Q_0(e_2)=-\lambda$, let $\SO$ denote the group of rotations preserving $Q_0$:
    \[
        \SO:= \left\{
        \begin{pmatrix}
            u & v \\
            \lambda v & u\\
        \end{pmatrix}
        \colon u^2-\lambda v^2=1
        \right\},
    \]
    and let $\SF$ denote the group of rigid motions of $\F^2$ generated by $\SO$ and the group of translations.
    \begin{prop}
      \label{prop:1}
        Let $V=\F^3$ and define $Q$ on $V$ by $Q(x,y,z)=Q_0(x,y)$, let $G=(\cl(V,Q)^+)^\times$ be the group of units of the even subalgebra, and let $Z$ be its centre.
        Then $G/Z$ is isomorphic to $\SF$.
    \end{prop}
    \begin{proof}
        By our definition of $Q$, $e_1^2=1,e_2^2=-\lambda,e_3^2=0$, $\cl(V,Q)$ is spanned by
        \[
            e_0, e_1,e_2,e_3,e_{12},e_{13},e_{23},e_{123},
        \]
        and $\cl(V,Q)^+$ is spanned by $e_0,e_{12},e_{13},e_{23}$.
    % Further,
    % \[
    % e_{12}^2=\lambda, e_{13}^2=0, e_{23}^2=0, e_{12}e_{13}=-e_{23}, e_{12}e_{23}=-\lambda e_{13},e_{13}e_{23}=0,
    % \]
    % and $e_{13}e_{12}=-e_{12}e_{13}$ and $e_{23}e_{12}=-e_{12}e_{23}$. 
        If $\fg= g_0e_0 + g_{12}e_{12}+g_{13}e_{13}+g_{23}e_{23}$, then
        \[
            N(\fg)=\fg\fg^*=g_0^2-\lambda g_{12}^2\,.
        \]
        Thus, if $g_0^2-\lambda g_{12}^2\not=0$, the inverse of $\fg$ is 
        \[
            \fg^{-1}=\frac 1{g_0^2-\lambda g_{12}^2}\fg^*.
        \]
        This determines the group of units explicitly.
    
        One can show by a computation that $G$ acts on $V$ by the sandwich product $(\fg,\fv)\mapsto \fg\fv\fg^{-1}$ (that is $G=\Gamma(\cl(V,Q)^+)$).
        In fact, the action of general element $\fg=g_0e_0 +g_{12}e_{12}+g_{13}e_{13}+g_{23}e_{23}$ in $G$ is given by
        \begin{align*}
            \fg e_1 \fg^{-1}&=\frac{g_0^2+\lambda g_{12}^2}{g_0^2-\lambda g_{12}^2}e_1
            +\frac{-2g_0g_{12}}{g_0^2-\lambda g_{12}^2}e_2
            +\frac{-2(g_0g_{13}+\lambda g_{12}g_{23})}{g_0^2-\lambda g_{12}^2}e_3,\\
            \fg e_2 \fg^{-1}&=\frac{-2\lambda g_0g_{12}}{g_0^2-\lambda g_{12}^2}e_1
            +\frac{g_0^2+\lambda g_{12}^2}{g_0^2-\lambda g_{12}^2}e_2
            +\frac{2\lambda(g_0g_{23}+ g_{12}g_{13})}{g_0^2-\lambda g_{12}^2}e_3,\\
            \fg e_3 \fg^{-1}&=e_3.
        \end{align*}
        Let $\rho\colon G\to\GL(V)$ denote this representation.
        The dual representation $\rho^*(\fg):=\rho(\fg^{-1})^T$, where $T$ denotes the transpose, acts on the dual space $V^*$, and in the standard basis $\{f_1,f_2,f_3\}$ on $V^*$ defined by $f_i(e_j)=\delta_{ij}$, we have
        \begin{equation}
          \label{eq:contragredient}
            \rho^*(\fg^{-1})=
            \frac 1{g_0^2-\lambda g_{12}^2}
            \begin{pmatrix}
                g_0^2+\lambda g_{12}^2 &-2g_0g_{12} & -2(g_0g_{13}+\lambda g_{12}g_{23})\\
                -2\lambda g_0g_{12} & g_0^2+\lambda g_{12}^2 & 2\lambda(g_0g_{23}+ g_{12}g_{13})\\
                0 & 0&  g_0^2-\lambda g_{12}^2\\
            \end{pmatrix}.
        \end{equation}
        The kernels of $\rho$ and $\rho^*$ are both equal to the subgroup $Z:=\{g_0e_0\colon g_0\not=0\}$.
        We wish to show that $G/Z$ is isomorphic to $\SF$.
        Let $R$ be the subgroup defined by $g_{13}=g_{23}=0$; the rational parameterisation of the circle shows that $\rho^*(R)$ is in bijection with the subgroup $\SO\subseteq\SF$.
        On the other hand, it is clear that the subgroup $T$ defined by $g_0=1,g_{12}=0$ is bijective with the translation subgroup of $\SF$.
        Since these subgroups generate $\SF$, we see that $\rho^*$ is surjective.
    \end{proof}
    
    We have shown more: $\SF$ is naturally identified with an (open) subset of $\PF$, and the nature of this identification yields some desirable features.
    In particular, the set of transformations in $\SF$ that map a point $x\in \F^2$ to a point $y\in\F^2$ is a line. 
    
    Let $\kappa\colon\SF\to G/Z$ denote the inverse of $\rho^*\colon G/Z\to\SF$.
    This is the \emph{kinematic mapping}\/ of Blaschke and Gr\"unwald, who both sought to embed the group of rigid motions in projective space.
    Let $\PF$ denote projective three space; we write $[X_0\colon X_1\colon X_2\colon X_3]$ for a typical point of $\PF$.
    \begin{cor}
      \label{cor:1}
        There is a bijection $\kappa\colon\SF\to\PF\setminus\{X_0^2-\lambda X_1^2=0\}$ such that the image of the rotation subgroup and translation subgroups are projective lines.
    
        Further, for all $g\in\SF$ there are projective maps $\phi_g\colon \PF\to \PF$ and $\phi^g\colon \PF\to \PF$ such that for all $x\in\SF$
        \[
            \kappa(gx) =\phi_g(\kappa(x))\andd \kappa(xg)=\phi^g(\kappa(x)).
        \]
    \end{cor}
    \begin{proof}
        The even subalgebra $\cl(V,Q)^+$ is isomorphic to $\F^4$ as a vector space, so the projective space $\PP(\cl(V,Q)^+)$ is $\PF$.
        On the other hand, $\PP(\cl(V,Q)^+)$ is just $\cl(V,Q)^+$ modulo the action of the multiplicative subgroup $Z$, so we have $G/Z\subseteq\PP(\cl(V,Q)^+)$.
        In fact, $G/Z$ consists of all points $[g_0\colon g_{12}\colon g_{13}\colon g_{23}]$ such that $g_0^2-\lambda g_{12}^2\not=0$.
        
        Since $\cl(V,Q)^+$ is an $\F$-algebra, left and right multiplication are $\F$-linear transformations.
        That is, if $\tilde\phi_{\fg}(\fv):=\fg\fv$ and $\tilde\phi^{\fg}(\fv):=\fv\fg$, then $\tilde\phi_{\fg}$ and $\tilde\phi^{\fg}$ are linear transformations.
        It follows that left and right translation in $G/Z$ are \emph{projective transformations}\/ of $\PF$.
    \end{proof}

\bibliographystyle{siam}
\bibliography{library}

\end{document}